\newtheorem{theorem}{Theorem} [section]
\newtheorem{prop}[theorem]{Proposition}
\newtheorem{lemma}[theorem]{Lemma}
\theoremstyle{definition}
\theoremstyle{remark}
\newtheorem*{remark}{Remark}
\numberwithin{equation}{section}
\numberwithin{figure}{section}
\numberwithin{example}{section}
\newcommand\C{{\mathbb C}}
\renewcommand\P{{\mathbb P}}
\newcommand\R{{\mathbb R}}
\newcommand\Q{{\mathbb Q}}
\newcommand\N{{\mathbb N}}
\newcommand\D{{\mathbb D}}
\newcommand\eps{\varepsilon}
\renewcommand\phi{\varphi}
\newcommand{\bC}{\mathbb{C}}
\newcommand{\bR}{\mathbb{R}}
\newcommand{\bN}{\mathbb{N}}
\newcommand{\bD}{\mathbb{D}}
\newcommand\Qbar{\overline{\mathbb{Q}}}
\begin{document} 

\title
{Discontinuity of a degenerating escape rate}

\author{Laura DeMarco}
\address{Department of Mathematics, Northwestern University, USA}%2033 Sheridan Road, Evanston, IL  60208, USA}
\email{demarco@math.northwestern.edu}

\author{Y\^usuke Okuyama}
\address{Division of Mathematics,
Kyoto Institute of Technology, Japan}%Sakyo-ku, Kyoto 606-8585} 
\email{okuyama@kit.ac.jp}

%\author
%[Laura DeMarco and Y\^usuke Okuyama]
%{Laura DeMarco and Y\^usuke Okuyama}

\date{\today}

\begin{abstract}
We look at degenerating meromorphic families of rational maps on $\mathbb{P}^1$ -- holomorphically parameterized by a punctured disk -- and we provide examples where the bifurcation current fails to have a bounded potential in a neighborhood of the puncture.  This is in contrast to the recent result of Favre-Gauthier that we always have continuity across the puncture for families of polynomials; and it provides a counterexample to a conjecture posed by Favre in 2016.  We explain why our construction fails for polynomial families and for families of rational maps defined over finite extensions of the rationals $\mathbb{Q}$. 
\end{abstract}

%\subjclass[2010]{Primary ; Secondary }

%\keywords{}
\thanks{This research was partially supported by JSPS Grant-in-Aid for Scientific Research (C), 15K04924, and the National Science Foundation DMS-1600718.}

\maketitle 

\section{Introduction}\label{sec:intro}

Let $f_t$ be a holomorphic family of rational maps on $\P^1$ of degree $d>1$, parameterized by the punctured unit (open) disk $\D^* = \{t\in\C:0 < |t| < 1\}$, and assume that the coefficients of $f_t$ extend to meromorphic functions on the unit disk $\D= \{t\in\C: |t| < 1\}$.  Let $a: \D \to \P^1$ be a holomorphic map.  In this article, we examine the potential function $g_{f,a}$ on $\D^*$ (having the order $o(\log|t|)$ as $t\to 0$) for the bifurcation measure associated to the pair $(f,a)$.  Our main result is that this potential function does not necessarily extend continuously across the puncture at $t=0$.  

The question of continuous extendability of $g_{f,a}$ across $t=0$ arose naturally in the study of degenerating families of rational maps, and specifically in the context of equidistribution questions and height functions associated to the family $f_t$; see, e.g., \cite{BD:polyPCF,Favre:degenerations}.  Continuity of the potential at $t=0$ was required to apply certain equidistribution theorems on arithmetic varieties (as in the proofs of the main results of \cite{BD:polyPCF, DM:variation,Favre:Gauthier:cubics,Ghioca:Ye:cubics}, and others).  Moreover, when $a(t)$ parameterizes a critical point of $f_t$, the bifurcation measure and its potential are related to the structural stability of the family $f_t$ \cite{D:current,Dujardin:Favre:critical}.  It is well known that continuity holds when $f_t$ has a uniform limit on the whole $\P^1$ as $t\to 0$, for any choice of $a$.
It is also true when $f_t$ is any family of polynomials with coefficients meromorphic in $t$, again for any choice of $a$ \cite{Favre:Gauthier:continuity}

To formulate the problem and our construction more precisely, 
we will work with $f_t$ in homogeneous coordinates: assume that we are given a family of homogeneous polynomial maps $\tilde{f}_t: \C^2\to\C^2$ of degree $d$, where the coefficients are holomorphic functions on the entire disk $\D$, such that for every $t\in\D^*$, 
$\tilde{f}_t^{-1}(0,0) =  \{(0,0)\}$
and $\tilde{f}_t$ projects to $f_t$ on $\P^1$.  There exists a continuous plurisubharmonic escape rate
$$G_{\tilde{f}}: \D^*\times(\C^2 \setminus\{(0,0)\}) \to  \R
$$ 
such that for each fixed $t \in \D^*$, the current $dd^c G_{\tilde{f}}(t, \cdot)$ on $\bC^2\setminus\{(0,0)\}$ projects to the measure of maximal entropy of $f_t$ on $\P^1$ \cite{Hubbard:Papadopol, Fornaess:Sibony}.  Given a holomorphic lift of $a$ to $\tilde{a}: \D\to \C^2\setminus\{(0,0)\}$, we may write
\begin{equation} \label{G near 0}
	G_{\tilde{f}}(t,\tilde{a}(t)) = \eta \log|t| + g_{f,a}(t),
\end{equation}
where $\eta\in \R$ represents a ``local height" for the pair $(f,a)$, and 
the function $g_{f,a}$ on $\D^*$ satisfies
$$ g_{f,a}(t) = o(\log|t|) $$
as $t\to 0$ \cite{D:stableheight}; see \S\ref{subharmonic extension}.
The value of $\eta$ and the subharmonic function $g_{f,a}$ depend on the choices of $\tilde{f}$ and $\tilde{a}$, but $g_{f,a}$ is uniquely determined up to the addition of a harmonic function on $\D^*$ which is bounded near $t=0$.  The Laplacian $\mu_{f,a} = \frac{1}{2\pi}\Delta g_{f,a}$ on $\D^*$ is the {\em bifurcation measure associated to the pair} $(f,a)$ \cite[\S 3]{Dujardin:Favre:critical}.

It turns out that the function $g_{f,a}$ is always bounded from above near $t=0$ (Lemma \ref{subharmonic}).  In this article, we construct examples of pairs $(f,a)$ that satisfy 
	$$\limsup_{t\to 0} g_{f,a}(t) = -\infty$$
to show that it need not be bounded from below, so in particular does not extend continuously across $t=0$. In our examples, the maps $f_t$ will converge to a rational map $\phi$ on $\P^1$ of degree $< d$ as $t\to 0$ locally uniformly on $\P^1\setminus H$, where $H$ is a non-empty finite set.  The idea of the construction is to choose $\phi$ and $a$ so that some sequence of iterates $\phi^{n_j}(a(0))$ accumulates fast on $H$ as $n_j\to \infty$. 

Furthermore, choosing $a(t)$ to parameterize a critical point of  the family $f_t$, we obtain a counterexample to the continuity statement in \cite[Conjecture 1]{Favre:degenerations}, in proving:

\begin{theorem} \label{discontinuity theorem}
For every integer $d>1$, there exists a holomorphic family $f_t$ of rational maps on $\P^1$ of degree $d$,
parameterized by $t \in \D^*$, whose coefficients extend to meromorphic functions on $\D$ but for which the bifurcation current associated to the family $f_t$ fails to have a bounded potential in any punctured neighborhood of $t = 0$.  
\end{theorem}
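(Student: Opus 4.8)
The plan is to produce the family $f_t$ as a degeneration of a lower‑degree rational map carrying a nonempty ``hole,'' and to reduce everything to a purely local statement at $t=0$. It suffices to exhibit one pair $(f,a)$ in which $a(t)$ parameterizes a critical point of $f_t$ and $g_{f,a}(t)\to-\infty$ as $t\to0$: writing $c_1,\dots,c_{2d-2}$ for the critical points of $f_t$, the potential of the bifurcation current of the family $f_t$ is, up to a harmonic function bounded near $t=0$, the sum $\sum_i g_{f,c_i}$; each summand is $o(\log|t|)$ and is bounded above near $t=0$ by Lemma~\ref{subharmonic}, so the sum is $o(\log|t|)$ and, having one summand tending to $-\infty$, itself tends to $-\infty$. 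A potential that is $o(\log|t|)$ and unbounded below cannot agree near $t=0$ with a bounded function plus a harmonic function — a harmonic function on $\D^*$ behaves like $c\log|t|+O(1)$ near $0$ — which is exactly the conclusion of Theorem~\ref{discontinuity theorem}. So it remains to construct a suitable $(f,a)$.

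For the degeneration data I would take $\phi=\lim_{t\to0}f_t$ to be, uniformly in $d$, a degree‑one map: fix an irrational $\theta$, let $\phi$ be a conjugate of $z\mapsto e^{2\pi i\theta}z$, let the hole be a single point $h=\{q=0\}$ of multiplicity $d-1$ lying on a $\phi$‑invariant circle, and put $a_0:=a(0)$ on that same circle. Then $\phi^n(a_0)$ stays on the circle, and $\dist(\phi^n(a_0),h)$ is comparable to the distance from $n\theta$ (plus a fixed shift) to the nearest integer. Choosing the continued fraction of $\theta$ to have gigantic partial quotients along a very sparse set of indices, one gets an increasing sequence $n_1<n_2<\cdots$ with
\[
\dist\bigl(\phi^{n_j}(a_0),h\bigr)\le e^{-c\,d^{\,n_j}}\qquad(j\ge1)
\]
for a small fixed $c>0$, while $\dist(\phi^n(a_0),h)$ stays bounded below for every $n\notin\{n_j\}$. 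Such an $a_0$ (equivalently, such a $\theta$) is necessarily transcendental; this is the only place where working over $\C$ rather than over $\Qbar$ is essential, and it is what the closing remarks invoke to explain why the construction cannot be carried out over a number field — height lower bounds prevent $\dist(\phi^n(a_0),h)$ from being super‑exponentially small — nor for polynomial families.

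Next I realize $\phi$ inside $\Rat_d$ by $\tilde f_t=q\,\tilde\phi+t\,\psi+O(t^2)$, with $q$ the $(d-1)$‑st power of a linear form vanishing at $h$, $\tilde\phi$ a linear lift of $\phi$, and $\psi$ homogeneous of degree $d$ chosen so that $\tilde f_t^{-1}(0)=\{(0,0)\}$ for $t\neq0$, so that $\psi(\tilde h)$ is proportional to $\tilde\phi(\tilde h)$ — which forces $f_t(h)\to\phi(h)$ and hence $f_t^n(a(t))\to\phi^n(a_0)$ for each fixed $n$ — and so that $f_t$ carries a critical point $a(t)$ with $a(t)\to a_0$ (there is always a critical point of $f_t$ limiting to $h$, produced by the raising of the degree; one may have to pass to a finite branched cover of the $t$‑disk to parameterize it, which is harmless). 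The coefficients are holomorphic, hence meromorphic, on $\D$. Now apply the telescoping identity
\[
G_{\tilde f}(t,w)=\log\|w\|+\sum_{n\ge0}\frac{1}{d^{\,n+1}}\log\frac{\|\tilde f_t(\tilde f_t^{\,n}(w))\|}{\|\tilde f_t^{\,n}(w)\|^{d}},
\]
whose summands are continuous and uniformly bounded above on $\P^1\times\D$, finite on $\P^1\times\D^*$, and tend to $-\infty$ only as their point approaches $h$ with $t\to0$. A shadowing estimate will show that for $|t|$ small the orbit $f_t^n(a(t))$ stays exponentially close to $\phi^n(a_0)$ for all $n\le n_{J(t)}$, where $J(t)\to\infty$ as $t\to0$ and $\log(1/|t|)\ge c'\,d^{\,n_{J(t)}}$; consequently the summands indexed by $n_1,\dots,n_{J(t)-1}$ each lower $G_{\tilde f}(t,\tilde a(t))$ by a definite amount $\approx -c(d-1)/d$, so $G_{\tilde f}(t,\tilde a(t))\to-\infty$, while since the times $n_j$ are sparse ($J(t)\le n_{J(t)}\le C\log\log(1/|t|)$) this divergence is $o(\log|t|)$. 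Comparing with \eqref{G near 0} forces $\eta=0$, whence $g_{f,a}=G_{\tilde f}(t,\tilde a(t))\to-\infty$, which is all that was needed.

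The main obstacle is the shadowing estimate through the hole. On $\{\dist(\cdot,h)>|t|^{1/(d-1)}\}$ one has $f_t=\phi+O\bigl(|t|/\dist(\cdot,h)^{\,d-1}\bigr)$, so a single passage within $e^{-c d^{n_j}}$ of $h$ can amplify the accumulated tracking error by a factor of order $|t|\,e^{c(d-1)d^{n_j}}$. One must therefore fix $c$, the growth of the gaps $n_{j+1}-n_j$, and the perturbation $\psi$ compatibly so that: (a) for all small $|t|$ the orbit genuinely tracks $\phi$ past $J(t)\to\infty$ of these near‑passes; (b) at each near‑pass before the last tracked one the orbit stays in $\{\dist(\cdot,h)>|t|^{1/(d-1)}\}$, which is what makes the corresponding summand $\approx\log\dist(\cdot,h)^{\,d-1}\approx -c(d-1)d^{n_j}$ — contributing a fixed negative amount $\approx -c(d-1)/d$ — rather than being of size $O(\log|t|)$ and getting absorbed into $\eta$; and (c) the tail $\sum_{n>n_{J(t)}}$ stays bounded below, so that $G_{\tilde f}(t,\tilde a(t))$ is squeezed between $-(J(t)-1)c(d-1)/d+C$ and $-O(\sqrt{\log(1/|t|)})$, giving simultaneously $G_{\tilde f}(t,\tilde a(t))\to-\infty$ and $G_{\tilde f}(t,\tilde a(t))=o(\log|t|)$. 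A secondary technical point is the nested construction of $a_0$ in the second step; for the rotation this is just the classical existence of Liouville‑type numbers realizing prescribed fast recurrence, but if one prefers to take $\deg\phi\ge2$ with $h$ in the Julia set it requires checking that the relevant preimages $\phi^{-n_j}\bigl(B(h,e^{-c d^{n_j}})\bigr)$ are nonempty and can be localized along the orbit.
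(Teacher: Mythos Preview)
Your opening reduction is correct and matches \S\ref{Lyapunov}. The construction, however, has a genuine gap at the critical-point step. If $\deg\phi=1$ and $f_t\to\phi$ locally uniformly on $\P^1\setminus\{h\}$, then $\phi'$ is nowhere zero, and by Hurwitz every critical point of $f_t$ must tend to $h$ as $t\to0$; there is no critical point $a(t)$ with $a(0)=a_0\neq h$. If instead you set $a_0=h$, then $\tilde f_0(\tilde a(0))=q(\tilde a(0))\,\tilde\phi(\tilde a(0))=(0,0)$, so $\eta>0$ and your claim that $\eta=0$ (hence $g_{f,a}=G_{\tilde f}(\cdot,\tilde a)$) collapses; moreover $f_t^n(a(t))\to\phi^n(a_0)$ then fails already at $n=1$. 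This is exactly why the paper treats $d=2$ separately in \S\ref{quadratic}, passing to the critical \emph{value} and computing $\eta=1/2$ by hand, and why for $d>2$ it does \emph{not} take $\phi$ of degree $1$ but rather a polynomial of degree $d-1$ with its own critical point $a_0$ and critical value $0$, so that $a_0$ is automatically a critical point of $f_t=\phi\cdot(z-h-\eps t)/(z-h+\eps t)$ for every $t$, with $a_0\neq h$. Your uniform ``degree-one $\phi$ with a hole of multiplicity $d-1$'' forfeits this mechanism.

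The shadowing programme you flag as ``the main obstacle'' is also unnecessary, and the paper bypasses it entirely. Once $a_0\neq h$ and $\phi^n(a_0)\neq h$ for all $n\ge0$, the lift satisfies $\tilde f_0^n(\tilde a(0))\in\C^2\setminus\{(0,0)\}$ for every $n$, so for each \emph{fixed} $n_j$ plain continuity in $t$ produces a radius $\delta_j>0$ on which the single summand $d^{-n_j-1}\log\bigl(\|\tilde f_t^{\,n_j+1}(\tilde a(t))\|/\|\tilde f_t^{\,n_j}(\tilde a(t))\|^d\bigr)$ is as negative as it is at $t=0$, up to a constant. Every other summand in the telescoping series is bounded above uniformly on $\{|t|\le 1/2\}$ by Lemma~\ref{upper bound lemma}. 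With $r_n=\exp(-n\,d^{n+1})$ this already yields $\sup_{|t|\le\delta_j}g_{f,a}\le C-n_j$, hence $\limsup_{t\to0}g_{f,a}=-\infty$; no tracking of the $f_t$-orbit for $t\neq0$, no simultaneous control of several near-passes, and no lower bound on the tail are required.
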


\begin{remark}  
It will be clear from the proof that the family $f_t$ can be chosen to be algebraic, in the sense that it extends to define a holomorphic family parameterized by $t$ in a quasiprojective curve $X$, with coefficients that are meromorphic on a compactification of $X$. 
\end{remark}

The bifurcation current associated to the family $f_t$ is equal to the Laplacian of the continuous and subharmonic function $t\mapsto L(f_t)$ on $\D^*$, where for each $t$, $L(f_t)$ is the Lyapunov exponent of $f_t$ with respect to its unique measure of maximal entropy. For more details on $L(f_t)$ and its relationship to $g_{f,a}$, see Section \ref{Lyapunov}.

The construction of examples of pairs $(f,a)$ for which $g_{f,a}$ fails to extend continuously across $t=0$ is laid out in Section \ref{recipe}.  Our use of the Baire category theorem in the construction is similar to that of \cite[Example 4]{Favre:1998}, \cite{Buff:courants}, or \cite{Diller:Guedj:regularity} in the context of higher-dimensional (bi)rational maps.
In Section \ref{Lyapunov}, we give the proof of Theorem \ref{discontinuity theorem}.  In Section \ref{limitations}, we comment on why the strategy for producing these examples fails for families of polynomials and for rational maps on $\P^1$ defined over $\Qbar$.  We expect that a continuous extension of $g_{f,a}$ to $\D$ always exists when the pair $(f,a)$ is algebraic and defined over $\Qbar$, as is known for algebraic families of elliptic curves \cite[Theorem II.0.1]{Silverman:VCHII} and therefore also for Latt\`es maps on $\P^1$ \cite[Proposition 3.4]{DM:variation}; see also \cite[Theorem A]{Jonsson:Reschke} in the context of (bi)rational maps in dimension 2.  The bifurcation current associated to a family $f$ was introduced in \cite{D:current}; its properties at infinity in the moduli space of quadratic rational maps (related to our Theorem \ref{discontinuity theorem}) were studied in \cite{Berteloot:Gauthier}.

We would like to thank Charles Favre, Thomas Gauthier, and the anonymous referee for helpful comments and suggestions.

%%%%%%
%%%%%%

\bigskip
\section{A recipe for discontinuity}
\label{recipe}

In this section, we construct the examples for which $g_{f,a}$ fails to extend continuously to the disk $\D$. 

\subsection{The potential is bounded from above}  \label{subharmonic extension}
Suppose we are given a family of homogeneous polynomial maps $\tilde{f}_t: \C^2\to\C^2$ of degree $d>1$, where the coefficients are holomorphic functions on the entire disk $\D$, and such that for every $t\in\D^*$, we have $\tilde{f}_t^{-1}(0,0) =  \{(0,0)\}$ so $\tilde{f}_t$ projects to a rational map $f_t$ 
on $\P^1$ of degree $d$.  We let $\tilde{a}: \D\to \C^2\setminus\{(0,0)\}$ be any holomorphic map, and let $a: \D\to \P^1$ be its projection.  For each $n\in\N$, there is a unique non-negative integer $o_n$ so that 
	$$F_n(t) := t^{-o_n} \tilde{f}_t^n(\tilde{a}(t))$$ 
is a holomorphic map from $\D$ to $\C^2\setminus\{(0,0)\}$.  
Choose any norm $\|\cdot\|$ on $\C^2$. The function $g_{f,a}$ on $\D^*$ defined by \eqref{G near 0} is the locally uniform limit on $\D^*$ of 
the sequence of continuous and subharmonic functions 
	$$g_n(t) :=  \frac{1}{d^n} \log\|F_n(t)\|\quad  \text{on }\D,$$
as $n\to\infty$, and the value $\eta$ of \eqref{G near 0} is given by 
	$$\eta = \lim_{n\to\infty}\; \frac{o_n}{d^n},$$ 
as explained in \cite[\S3]{D:stableheight}. 
Note, in particular, that the function $g_{f,a}$ is continuous and subharmonic on $\D^*$.

The following observation is not required in this section, but it will be useful in Section \ref{Lyapunov}. 

\begin{lemma} \label{subharmonic}
The function $g_{f,a}$ is bounded from above on $\{0<|t|\le r\}$ for every $r\in(0,1)$.
\end{lemma}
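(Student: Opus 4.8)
The plan is to bound $g_{f,a}$ from above by finding a single uniform upper bound for the approximating sequence $g_n(t) = \frac{1}{d^n}\log\|F_n(t)\|$ on a slightly larger disk, and then pass to the limit. First I would observe that since $\tilde f_t$ has coefficients holomorphic on $\D$, there is, on any compact annulus $\{r_1 \le |t| \le r_2\} \subset \D^*$ (or indeed on any disk $\{|t| \le r_2\}$ with $r_2 < 1$), a uniform bound of the form $\|\tilde f_t(z)\| \le C\,\|z\|^d$ for all $z \in \C^2$, where $C$ depends only on $r_2$ and the chosen norm; this is just the triangle inequality applied to the (bounded) coefficients. Iterating, $\|\tilde f_t^n(\tilde a(t))\| \le C^{1 + d + \cdots + d^{n-1}} \|\tilde a(t)\|^{d^n} \le C^{d^n/(d-1)} M^{d^n}$, where $M = \sup_{|t|\le r_2}\|\tilde a(t)\| < \infty$.

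Next I would control the division by $t^{o_n}$. Since $F_n(t) = t^{-o_n}\tilde f_t^n(\tilde a(t))$ extends holomorphically (hence is bounded) on $\D$, and equals $t^{-o_n}\tilde f_t^n(\tilde a(t))$ on $\D^*$, the maximum modulus principle gives, for $|t| \le r < r_2 < 1$,
$$
\|F_n(t)\| \;\le\; \max_{|s| = r_2}\|F_n(s)\| \;=\; \max_{|s| = r_2} |s|^{-o_n}\|\tilde f_s^n(\tilde a(s))\| \;=\; r_2^{-o_n}\max_{|s|=r_2}\|\tilde f_s^n(\tilde a(s))\|.
$$
Taking $\frac{1}{d^n}\log$ of both sides and using the iteration estimate above,
$$
g_n(t) \;\le\; -\frac{o_n}{d^n}\log r_2 \;+\; \frac{1}{d(d-1)}\cdot\frac{\log C^{\,d}}{1} \;+\; \log M \;=\; -\frac{o_n}{d^n}\log r_2 + \frac{\log C}{d-1} + \log M,
$$
valid for all $|t| \le r$. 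Since $o_n/d^n \to \eta$ (a finite real number, as stated in \S\ref{subharmonic extension}) and $-\log r_2 > 0$, the right-hand side converges to $-\eta\log r_2 + \frac{\log C}{d-1} + \log M$, hence is bounded by some constant $C' = C'(r_2)$ independent of $n$. Therefore $g_n(t) \le C'$ for all $n$ and all $|t| \le r$, and letting $n \to \infty$ gives $g_{f,a}(t) \le C'$ on $\{0 < |t| \le r\}$, as desired.

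The only mild subtlety — and the step I would be most careful about — is making sure the negative-power factor $t^{-o_n}$ is handled correctly: one cannot bound $\|F_n\|$ directly on $\D^*$ by an iteration estimate because the estimate $\|\tilde f_t^n(\tilde a(t))\| \le C^{d^n/(d-1)}M^{d^n}$ does not see the cancellation encoded in $o_n$, while a naive bound $|t|^{-o_n} \le r_1^{-o_n}$ on an annulus $\{r_1 \le |t|\le r_2\}$ would blow up as $r_1 \to 0$. The maximum modulus principle on the full disk $\{|t| \le r_2\}$ (legitimate because $F_n$ extends holomorphically across $0$) is exactly what avoids this: it trades the bad factor $|t|^{-o_n}$ for the controlled factor $r_2^{-o_n}$, and the hypothesis $\eta = \lim o_n/d^n < \infty$ then keeps the resulting bound uniform in $n$. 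Everything else is routine.
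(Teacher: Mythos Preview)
Your proof is correct. Both your argument and the paper's rest on the same key observation: the approximants $g_n(t) = \frac{1}{d^n}\log\|F_n(t)\|$ are subharmonic on the \emph{full} disk $\D$ (because $F_n$ extends holomorphically across $0$), so the maximum principle lets one push a bound on a circle $\{|t|=r_2\}$ inward to the whole disk.

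The difference is in how that boundary bound is obtained. The paper simply invokes the locally uniform convergence $g_n \to g_{f,a}$ on $\D^*$ (cited from \cite{D:stableheight}): since the limit $g_{f,a}$ is continuous on the compact circle $C_r$, it is bounded there by some $M_r$, hence $g_n \le M_r + 1$ on $C_r$ for all large $n$, and subharmonicity gives the same bound on $\{|t|\le r\}$. You instead derive an explicit bound on the circle from the elementary iteration estimate $\|\tilde f_t(z)\|\le C\|z\|^d$ together with the convergence $o_n/d^n \to \eta$. Your route is more self-contained and yields an explicit constant; the paper's is shorter but leans on the uniform-convergence black box (both, of course, ultimately rely on \cite{D:stableheight} for the existence of $\eta$). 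One cosmetic remark: what you call the ``maximum modulus principle'' for $\|F_n\|$ is, strictly speaking, the maximum principle for the subharmonic function $\log\|F_n\|$ (equivalently, component-wise maximum modulus), but this is harmless.
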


\begin{proof}
Fix any $r\in(0,1)$.  We know that $\lim_{n\to\infty}g_n=g_{f,a}$ uniformly on the circle $C_r = \{|t| = r\}$.  Let $M_r = \max_{C_r} g_{f,a}$.  Then, for all $n$ large enough, we have $g_n \leq M_r + 1$ on $C_r$.  As the functions $g_n$ are subharmonic 
on $\D$, we also have $g_n \leq M_r + 1$ on the disk $\{|t| \leq r\}$ for all $n$ large enough.  It follows that $g_{f,a} \leq M_r + 1$ on the punctured disk $\{0 < |t| \leq r\}$.
\end{proof}

\begin{remark}  
As an immediate consequence of Lemma \ref{subharmonic}, 
the function $g_{f,a}$ could extend to a subharmonic function on the disk $\D$.  

In this article,
we would dispense with this possible extension of $g_{f,a}$, so that
the domain of definition of $g_{f,a}$ is kept being the punctured disk $\bD^*$.
\end{remark}

\subsection{The ingredients for discontinuity} \label{ingredients}
Let $\phi\in\bC(z)$ be a rational map on $\P^1$ of degree $e \geq 1$, and suppose that there is a point $a_0\in\C$ such that $\#\{\phi^n(a_0):n\in\bN\}=\infty$ and that $\omega_\phi(a_0)\cap\{\phi^n(a_0):n\in\bN\}\neq\emptyset$, where 
\begin{gather*}
 \omega_\phi(a_0):=\bigcap_{N\in\bN}\overline{\{\phi^n(a_0):n>N\}}
\end{gather*}
is the $\omega$-limit set of $a_0$ under $\phi$.  Then there exists $N_0\in\bN$ so that $\{\phi^n(a_0):n\geq N\}$ is dense in $\omega_\phi(a_0)$ for all $N\geq N_0$.  

Let $\{r_n\}$ be any sequence in $\bR_{>0}$ decreasing to $0$ as $n\to\infty$, which will be chosen appropriately later.   It follows that the set
	$$U_N(a_0, \{r_n\}) := \left( \bigcup_{n\geq N}\{z\in \omega_\phi(a_0): [z,\phi^n(a)]<r_n\}\right)
		 \setminus \{a_0, \phi(a_0), \ldots, \phi^{N-1}(a_0)\}$$ 
is open and dense in $\omega_\phi(a_0)$ for all $N\geq N_0$.  Here $[\cdot,\cdot]$ denotes the chordal distance on $\P^1$.  Therefore, by the Baire category theorem, 
	$$ B_\phi(a_0, \{r_n\}):=\bigcap_{N\geq N_0} U_N(a_0, \{r_n\})$$
is dense in $\omega_\phi(a_0)$. 

Fix any $h\in B_\phi(a_0, \{r_n\})\cap\bC$.  Then $\phi^n(a_0) \not= h$ for all $n\in\N\cup\{0\}$, and there is a sequence $n_j \to \infty$ such that 
\begin{gather}	
 	0<[\phi^{n_j}(a_0),h]<r_{n_j}\label{eq:derived}
\end{gather}
for all $j\in \N$.  

We consider the family
\begin{gather} \label{f definition}
 f_t(z):=\phi(z)\cdot\frac{z-h - \eps t}{z- h +\eps t}
\end{gather}
parameterized by $t \in \D^*$, where $\eps>0$ is chosen so that $\phi$ has neither zeros nor poles in the set $\{z: 0 < |z-h| < \eps\}$.  Thus, $f_t$ defines a holomorphic family of rational maps of degree $d:= e+1 >1$.  As $t\to0$, the maps $f_t$ converge locally uniformly to $\phi$ on $\P^1\setminus\{h\}$.

\subsection{An unbounded escape rate}
Set now
	$$r_n=\exp(-n\,d^{n+1})$$ 
for each $n\in\bN$.  
Working on $\C^2$, we define 
	$$\tilde{f}_t(z,w) := (\, P(z,w)(z-(h+\eps t)w), \, Q(z,w)(z-(h-\eps t)w) \,)$$
for all $t\in \D$, where $P$ and $Q$ are homogeneous polynomials of degree $e = \deg \phi$ such that $\phi(z) = P(z,1)/Q(z,1)$.  Let $\tilde{a}: \D\to \C^2\setminus\{(0,0)\}$ be any holomorphic map such that $\tilde{a}(0) = (a_0,1)$
and let $a: \D\to \P^1$ be its projection to $\P^1$.

Choose any norm $\|\cdot\|$ on $\C^2$.   As $\phi^n(a_0) \not= h$ for all $n\geq 0$, we see that $\tilde{f}_{0}^{n}(\tilde{a}(0)) \not= (0,0)$ for all $n\geq 0$.  Therefore, as described in \S\ref{subharmonic extension}, we have $\eta = 0$ and the function $g_{f,a}$ is given by the formula 
\begin{equation} \label{good g}
	g_{f,a}(t) = \lim_{n\to\infty} \frac{1}{d^n} \log \| \tilde{f}_t^n(\tilde{a}(t)) \|
\end{equation}
for $t\in \D^*$ \cite[Proposition 3.1]{D:stableheight}.

Set
 $$\Phi  := (P,Q) \quad\mbox{and}\quad H(z,w) := z-hw$$
so that $\tilde{f}_0 = (HP, HQ)$.  For all $n \geq 0$,  
as $\deg \Phi = e >0$, the iteration formula of \cite[Lemma 2.2]{D:measures} states that
$$\tilde{f}_0^n = \left( P_n \cdot \prod_{k=0}^{n-1}( (\Phi^k)^* H)^{d^{n-k-1}} , \; Q_n \cdot \prod_{k=0}^{n-1} ((\Phi^k)^* H)^{d^{n-k-1}} \right),$$
where we set $\Phi^n = (P_n, Q_n)$, so that 
$$ \frac{\log\|\tilde{f}_0^n\|}{d^n} =\sum_{k=0}^{n-1}\frac{\log|(\Phi^k)^*H|}{d^{k+1}}
+\frac{\log\|\Phi^n\|}{d^n}\quad\text{on }\bC^2\setminus\{(0,0)\}, $$
and consequently, 
\begin{equation} \label{distance term}
 \log\frac{\|\tilde{f}_0\circ \tilde{f}_0^n\|}{\|\tilde{f}_0^n\|^d}
=\log\frac{|(\Phi^n)^*H|}{\|\Phi^n\|}+\log\frac{\|\Phi\circ \Phi^n\|}{\|\Phi^n\|^e}
\quad \text{on }\bC^2\setminus \tilde{f}_0^{-n}(0,0).
\end{equation}

Note that $\log \|\Phi\|$ is bounded on the unit sphere in $\C^2$, so the last term on the right-hand side of \eqref{distance term} is 
bounded on $\bC^2\setminus \{(0,0)\}$
uniformly in $n \ge 0$.  The first term on the right-hand side of \eqref{distance term} is the $\log$ of $[\phi^n(\cdot),h]$,
up to scaling of the metric $[\cdot,\cdot]$; therefore, combined with \eqref{eq:derived}, we see that there is a constant $C$ so that
 $$ \log \frac{\|\tilde{f}_0(\tilde{f}_0^{n_j}(\tilde{a}(0)))\|}{\|\tilde{f}_0^{n_j}(\tilde{a}(0))\|^d} < C + \log (r_{n_j}) = C - n_j d^{n_j+1}$$
for all $j$.  For all $j$, by continuity of $\tilde{f}^{n_j}_t(\tilde{a}(t))$ as a map from $\D$ to $\C^2\setminus\{(0,0)\}$, there is a radius $\delta_j\in(0,1/2)$ such that 
\begin{equation} \label{estimate near 0}
	\sup_{|t| \leq \delta_j} \; \log \frac{\|\tilde{f}_t(\tilde{f}_t^{n_j}(\tilde{a}(t)))\|}{\|\tilde{f}_t^{n_j}(\tilde{a}(t))\|^d} \leq   C - n_j d^{n_j+1}.
\end{equation}

On the other hand, we also have from \eqref{good g} that
\begin{eqnarray} \label{for upper bound}
 g_{f,a}(t) 
 	&=& \log\|\tilde{a}(t)\|+\sum_{k=0}^{\infty}\frac{1}{d^{k+1}}\log\frac{\|\tilde{f}_t(\tilde{f}_t^k(\tilde{a}(t)))\|}{\|\tilde{f}_t^k(\tilde{a}(t))\|^d}   \nonumber  \\
	&=& \log\|\tilde{a}(t)\|+\frac{1}{d^{n_j+1}} \log\frac{\|\tilde{f}_t(\tilde{f}_t^{n_j}(\tilde{a}(t)))\|}{\|\tilde{f}_t^{n_j}(\tilde{a}(t))\|^d}
+\sum_{k\neq n_j}\frac{1}{d^{k+1}}\log\frac{\|\tilde{f}_t(\tilde{f}_t^k(\tilde{a}(t)))\|}{\|\tilde{f}_t^k(\tilde{a}(t))\|^d}
\end{eqnarray}
for each $j$.  

The following is elementary but useful:  
\begin{lemma} \label{upper bound lemma}
Let $F_t = (P_t, Q_t)$ be any family of homogeneous polynomial maps of degree $d \geq 2$, with coefficients that are bounded holomorphic functions of $t$ in $\D$.  Then there is a constant $C$ so that 
	$$\frac{\|F_t(z,w)\|}{\|(z,w)\|^d} \leq C$$
for all $(z,w) \in \C^2\setminus\{(0,0)\}$ and all $t\in \D$.
\end{lemma}

\begin{proof}
As $F_t$ is homogeneous, it suffices to bound its values on the unit sphere in $\C^2$.  The result follows because the coefficients are bounded uniformly on $\D$.
\end{proof}

As a consequence of Lemma \ref{upper bound lemma}, we can bound all the terms in the final sum of \eqref{for upper bound} from above,  uniformly on the disk $\{|t| \leq 1/2\}$, 
and therefore there is a constant $C'$ so that 
\begin{equation} \label{before choosing r}
	\sup_{|t|\leq 1/2} \; g_{f,a}(t) \leq C' + \frac{1}{d^{n_j+1}} \log\frac{\|\tilde{f}_t(\tilde{f}_t^{n_j}(\tilde{a}(t)))\|}{\|\tilde{f}_t^{n_j}(\tilde{a}(t))\|^d}
\end{equation}
for every $j$.  Combined with \eqref{estimate near 0}, we conclude that there is another constant $C$ so that 
	$$\sup_{|t| \leq \delta_j} \; g_{f,a}(t) \leq   C - n_j $$
for every $j$.  Letting $j\to \infty$ shows that 
	$$\limsup_{t\to 0} g_{f,a}(t) = -\infty.$$

\subsection{Examples with degree $d=2$}
Fix any $\theta \in \R\setminus \Q$, and let 
	$$\phi(z) = e^{2\pi i \theta} z.$$
Set $a_0 = 1$; the $\omega$-limit set $\omega_\phi(a_0)$ is the unit circle in $\bC$.  Set $r_n = \exp(-n \, 2^{n+1})$ for each $n\in\N$, and define $B_\phi(1, \{r_n\})$ as above.  Taking any $h \in B_\phi(1, \{r_n\})$ and setting $\eps=1$, we define the family $f_t$ as in \eqref{f definition}.  Then the potential function $g_{f,a}$ fails to be bounded around $t=0$ for any holomorphic map $a: \D\to \P^1$ with $a(0) = 1$.

Note that only the M\"obius transformations $\phi$ 
which are M\"obius (i.e., $\mathrm{PSL(2,\bC)}$-) conjugate to an irrational rotation have recurrent orbits, as needed for the construction described above.

\subsection{Examples in degree $>2$, with a marked critical point} \label{cubic}
Fix an integer $d>2$. For every $\theta\in\bR$, the polynomial
\begin{equation} \label{funny phi}
\phi(z) = e^{2\pi i \theta} \left( z - \frac{e^{2\pi i\theta} - (d-1)}{(d-1)^{(d-1)/(d-2)}}  \right)^{d-1}
\end{equation}
of degree $d-1$
has a fixed point with multiplier $e^{2\pi i \theta}$ and its unique finite critical value at $z=0$.  Now fix $\theta$ to be irrational; the critical point 
	$$a_0 := \frac{e^{2\pi i \theta} - (d-1)}{(d-1)^{(d-1)/(d-2)}}$$ 
of $\phi$ 
satisfies $\#\{\phi^n(a_0):n\in\bN\}=\infty$ and
$a_0 \in \omega_\phi(a_0)$ \cite{Mane:Fatou}.  Let
	$$r_n = \exp(-n \, d^{n+1})$$
for each $n\in\N$ and fix any point $h \in B_\phi(a_0, \{r_n\})$.  
Choose any $\eps\in(0,|a_0 - h|]$, and set
	 $$f_t(z)=\phi(z)\cdot\frac{z-h-\eps \, t}{z-h+\eps \, t},$$
which is a rational map on $\P^1$ of degree $d$ for all $t\in \D^*$.  We let 
	$$a(t) = a_0$$
for all $t \in \D$, which satisfies $f_t'(a(t)) = 0$ for all $t\in \D^*$.  (This is the reason for requiring the unique finite critical value $\phi(a_0)$ of $\phi$ to be 0.)  It follows that $g_{f,a}$ fails to  be bounded around $t=0$.  

\subsection{Example in degree 2, with a marked critical point} \label{quadratic}
We can produce examples of $(f,a)$ also for quadratic rational maps $f_t$ where $a(t)$ parameterizes a critical point of $f_t$, though we do not have as much flexibility as in higher degrees.  For example, we have:

\begin{lemma}
Suppose $f_t(z) = \phi(z) (z-h-t^n)/(z-h+t^n)$ is a family of quadratic rational maps, for some $h \in \C$, $n \in \N$, and a rational map $\phi$ on $\P^1$
of degree 1, and suppose that  $c_1,c_2:\D\to\P^1$ are holomorphic maps parameterizing the two critical points of $f_t$. Then 
	$$\lim_{t\to 0} c_1(t) = \lim_{t\to 0} c_2(t) = h.$$
\end{lemma}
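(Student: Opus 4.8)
The plan is to reduce the statement to tracking the zeros of the Jacobian of a homogeneous lift of $f_t$. Write $\phi=[\,az+bw:cz+dw\,]$ with $\delta:=ad-bc\neq 0$. Since $f_t$ is assumed to have degree $2$ — so in particular $\phi$ has neither a zero nor a pole at $h\pm t^n$ for small $t$ — I would lift $f_t$ to the holomorphic family of degree-$2$ homogeneous maps
\[
\tilde f_t(z,w)=\bigl((az+bw)(z-(h+t^n)w),\ (cz+dw)(z-(h-t^n)w)\bigr),
\]
which projects to $f_t$ on $\P^1$ and satisfies $\tilde f_t^{-1}(0,0)=\{(0,0)\}$ for $t\in\D^*$. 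The critical points of $f_t$ in $\P^1$ are then exactly the zeros in $\P^1$ of the Jacobian determinant $J_t:=\det D\tilde f_t$, a homogeneous polynomial of degree $2$ in $(z,w)$ whose coefficients are holomorphic functions of $t\in\D$. Since $c_1(t),c_2(t)$ are these two zeros for $t\in\D^*$, it suffices to show that every zero of $J_t$ converges to the point $[h:1]$ as $t\to0$; then any holomorphic branches $c_1,c_2$ must converge to $h$.

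The one real subtlety is that $f_0=\phi$ has degree $1$ and hence no critical points at all, so one cannot simply pass to a limit in the maps $f_t$; instead one must follow the two critical points through the fixed-degree polynomial $J_t$ and, in doing so, rule out a critical point escaping to $z=\infty$. Both issues are resolved by computing $J_0:=\lim_{t\to0}J_t$ directly. At $t=0$ one has $\tilde f_0=(HP,HQ)$ with $H(z,w)=z-hw$, $P=az+bw$, $Q=cz+dw$. Expanding $\det D\bigl(H\cdot(P,Q)\bigr)$ by multilinearity of the determinant in its rows, the term carrying $H_zH_w$ cancels, and using that $P,Q$ are linear — so $\det D(P,Q)=\delta$, $\;P\,\partial_wQ-Q\,\partial_wP=\delta z$, $\;P\,\partial_zQ-Q\,\partial_zP=-\delta w$ — together with $H_z=1$, $H_w=-h$, I obtain
\[
J_0=H\bigl(H_z(P\,\partial_wQ-Q\,\partial_wP)-H_w(P\,\partial_zQ-Q\,\partial_zP)+H\det D(P,Q)\bigr)=2\delta\,(z-hw)^2 .
\]
(Equivalently, on the affine chart, $f_t'(z)=0$ is the equation $\delta\bigl((z-h)^2-t^{2n}\bigr)+2t^n(az+b)(cz+d)=0$, whose leading coefficient $\delta+2t^n ac$ tends to $\delta\neq 0$ and which converges to $\delta(z-h)^2=0$; one then checks separately that $z=\infty$ is never a critical point of $f_t$.)

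To finish I would write $J_t=\alpha(t)z^2+\beta(t)zw+\gamma(t)w^2$; the computation of $J_0$ gives $\alpha(0)=2\delta\neq 0$, $\beta(0)=-4\delta h$, $\gamma(0)=2\delta h^2$, and $\beta(0)^2-4\alpha(0)\gamma(0)=0$. Since $\alpha$ is continuous with $\alpha(0)\neq 0$, for $t$ near $0$ the polynomial $J_t$ is genuinely quadratic in $z$, so both its zeros are finite and equal $\bigl(-\beta(t)\pm\sqrt{\beta(t)^2-4\alpha(t)\gamma(t)}\bigr)/(2\alpha(t))$; the square-root term is continuous at $t=0$ because its argument tends to $0$, so both zeros converge to $-\beta(0)/(2\alpha(0))=h$ as $t\to0$. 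Hence $c_1(t)\to h$ and $c_2(t)\to h$. The main obstacle, such as it is, is purely bookkeeping — ensuring no critical point runs off to $\infty$ — which is precisely why the homogeneous computation is the clean route: there $J_0=2\delta(z-hw)^2$ visibly has its entire zero divisor at the finite point $[h:1]$ and nonzero leading coefficient, so the continuity-of-roots step is immediate.
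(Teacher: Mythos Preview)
Your argument is correct. You lift to a homogeneous map, compute the Jacobian $J_t$, verify that $J_0=2\delta(z-hw)^2$ (your expansion checks out: with $P=az+bw$, $Q=cz+dw$ one has $Pd-bQ=\delta z$ and $Pc-aQ=-\delta w$, so the bracketed factor is $\delta z-h\delta w+\delta H=2\delta(z-hw)$), and then use continuity of roots of a quadratic with nonvanishing leading coefficient to send both zeros to $[h:1]$. The remark that $\alpha(0)=2\delta\neq0$ is exactly what rules out a critical point escaping to $\infty$, so nothing is missing.

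The paper's proof is different and much shorter: it is the two-line qualitative observation that $\phi$, having degree~$1$, has no critical points at all, while $f_t\to\phi$ locally uniformly on $\P^1\setminus\{h\}$; hence any accumulation point of $c_j(t)$ outside $\{h\}$ would (by a Hurwitz-type argument) have to be a critical point of $\phi$, a contradiction. That argument costs no computation and would work verbatim for any family degenerating off a single point to a critical-point-free map. Your computation, by contrast, is specific to this family but yields strictly more: the explicit affine equation $\delta\bigl((z-h)^2-t^{2n}\bigr)+2t^n(az+b)(cz+d)=0$ for the critical points is exactly what one solves in \S\ref{quadratic} (with $\phi(z)=e^{2\pi i\theta_0}z$ and $n=2$) to write $c_\pm(t)$ in closed form.
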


\begin{proof}
As $\phi$ has degree 1, it has no critical points of its own.  On the other hand, 
$\lim_{t\to 0}f_t=\phi$ locally uniformly on $\P^1\setminus\{h\}$, so it must be that $c_1(t), c_2(t) \to h$ as $t\to 0$.
\end{proof}

In particular, if we wish to let $a(t)$ parameterize a critical point of $f_t$, then necessarily we will have $a(0) = h$, which was not allowed by the construction above.  

However, let us fix our decreasing sequence as
	$$r_n = \exp(-(n-1) \, 2^n),$$
for each $n\in\N$, and 
apply the Baire Category Theorem now to the space of rotations 
$z\mapsto e^{2i\pi\theta}z$ 
to find 
	$$\theta_0 \; \in \; \left(\bigcap_{N\in\bN} \bigcup_{n\geq N} \{\theta\in\R : [1, e^{2\pi i n \theta}] < r_n\}\right) \setminus \Q.$$
Then $e^{2\pi i n \theta_0} \not= 1$ for all $n\in\N$, and there is a sequence $n_j \to \infty$ as $j\to\infty$ such that
	$$ 0< [1, e^{2\pi i n_j \theta_0}] < r_{n_j}$$
for all $j$.  

Now set $\phi_0(z) = e^{2\pi i \theta_0} z$, and 
	$$f_t(z)= \phi_0(z) \cdot\frac{z-1-t^2}{z-1+t^2}$$
for $t\in \D^*$.  Note that we have used $t^2$ here rather than $t$ in \eqref{f definition}; this is so that we can holomorphically parameterize the critical points of $f_t$.  Indeed, 
the critical points  of $f_t$ are 
	$$c_{\pm}(t) = 1 - t^2 \pm \sqrt{2t^4 - 2 t^2} = 1 - t^2 \pm i \sqrt{2} \,  t \sqrt{1 - t^2},$$
which extend holomorphically on $\D$ by setting $c_{\pm}(0)=1$. Define the function $a:\bD\to\C$ by either $c_+$ or $c_-$
so that $f_t$ has the critical value
	$$v(t) := f_t(a(t)) = e^{2\pi i \theta_0} + O(t)\quad  \text{as }t\to 0,$$
which also extends holomorphically to $\D$ by setting $v(0):=v_0:= e^{2\pi i \theta_0} = \phi_0(1)$.
We also set 
	$$\tilde{f}_t(z,w) := (e^{2\pi i \theta_0} z (z - (1+t^2) w), (z-(1-t^2)w)w)$$
for all $t\in \D$.

We will work with the pair $(f,v)$.  Then, since $\phi_0^n(v_0) \not=1$ 
for all $n\in\N\cup\{0\}$ and since
we also have 
	$$0 < [1, \phi_0^{n_j-1}(v_0)] < r_{n_j} = \exp(-(n_j-1) \, 2^{(n_j-1)+1})$$
for all $j$, the arguments above go through exactly as before -- applied to the sequence $\{n_j-1\}_j$ -- to show that $g_{f,v}$ fails to be bounded 
around $t=0$.  

Finally, if we set $\tilde{v}(t) = (v(t), 1)$ and $\tilde{a}(t) = (a(t), 1)$, then 
	$$\tilde{f}_t(\tilde{a}(t)) = (a(t)-1+t^2) \, \tilde{v}(t).$$
Note that $a(t)-1+t^2 =  \pm i \sqrt{2} \,  t \sqrt{1 - t^2}$  on $\D$, so that the function
	$$h(t) = \log|a(t)-1+t^2| - \log|t|$$
on $\D^*$ extends to a harmonic function on the disk $\D$. 
We have
	$$G_{\tilde{f}}(t, \tilde{v}(t)) = g_{f,v}(t)$$
on $\D^*$ from the definitions given in \eqref{G near 0} and because the pair satisfies the hypotheses for \eqref{good g}.  Consequently, 
\begin{eqnarray*} 
G_{\tilde{f}}(t, \tilde{a}(t)) &=& \frac12 \, G_{\tilde{f}}(t, \tilde{f}_t(\tilde{a}(t))) \\
	&=& \frac12 \left( G_{\tilde{f}}(t, \tilde{v}(t)) + \log|a(t)-1+t^2| \right) \\
	&=&	\frac12 \, g_{f,v}(t) + \frac12 \, h(t) + \frac12\,  \log|t|
\end{eqnarray*}
so that, by the definition of $g_{f,a}$ in \eqref{G near 0}, we have $\eta=1/2$ and
	$$g_{f,a}(t) = \frac12 \, g_{f,v}(t) + \frac12 \, h(t)$$
on $\D^*$.
We conclude that the function $g_{f,a}$ also fails to be bounded near $t=0$.

%%%%%%
%%%%%%
\bigskip
\section{Lyapunov exponents and the bifurcation current}
\label{Lyapunov}

The Lyapunov exponent of an individual rational map $f$
on $\P^1$ of degree $>1$, with respect to its unique measure $\mu_f$ 
of maximal entropy 
on $\P^1$, is 
the positive and finite quantity
	$$L(f) = \int_{\P^1} \log |f'| \, d\mu_f,$$
where $|\cdot|$ is any choice of metric on the tangent bundle of $\P^1$. 

Let $f_t$ be a holomorphic family of rational maps on $\P^1$ of degree $d>1$
parameterized by $\D^*$ whose coefficients extend to meromorphic functions 
on $\D$. If all the critical points of $f_t$ 
are parameterized by holomorphic maps $c_1,\ldots,c_{2d-2}:\D\to\P^1$,
then 
\begin{equation} \label{L formula}
	L(f_t) = h(t) + \sum_{j=1}^{2d-2}g_{f,c_j}(t)
\end{equation}
on $\D^*$, for a harmonic function $h$ on $\D^*$ satisfying $h(t) = O(\log|t|)$ as $t\to 0$ \cite[Theorem 1.4]{D:lyap}, \cite[Theorem C]{Favre:degenerations}.  By the symmetry in the critical points in \eqref{L formula}, this formula holds even if the critical points cannot be holomorphically parameterized on $\D^*$.  The {\em bifurcation current} associated to the family $f_t$ can be given by 
	$$T_{\mathrm{bif}} := \frac{1}{2\pi}\, \Delta L(f_t)$$
on $\D^*$, in the sense of distributions; the original definition of $T_{\mathrm{bif}}$ in \cite{D:current} was based on the right hand side of \eqref{L formula}.  From \cite[Theorem 1.1]{D:lyap}, the support of $T_{\mathrm{bif}}$ is equal to the bifurcation locus of the family $f_t$ in the sense of 
\cite{Mane:Sad:Sullivan,Lyubich:stability}.  

In particular, because the sum $\sum_j g_{f,c_j}$ in \eqref{L formula} is $o(\log |t|)$ near $t=0$, we see that the bifurcation current $T_{\operatorname{bif}}$
has a bounded potential if and only if the sum $\sum_j g_{f,c_j}$ is bounded near $t=0$.  

\begin{proof}[Proof of Theorem \ref{discontinuity theorem}.]
We give examples in an arbitrary degree $>1$.  First, let $f_t$ be the holomorphic family of quadratic rational maps on $\P^1$ parameterized by $\D^*$ described in \S\ref{quadratic}. As we have seen, neither of the functions $g_{f,c_{\pm}}$ extend continuously to $\D$; indeed, both tend to $-\infty$ as $t\to 0$.  Hence by \eqref{L formula}, the bifurcation current for the family $f_t$ fails to have a potential bounded around $t=0$.   

Next, let $f_t$ be the holomorphic family of rational maps on $\P^1$ of degree $d>2$ parameterized by $\D^*$, described in \S\ref{cubic}.  As we have seen, the constant map $a(t)\equiv a_0$ on $\D$ satisfies $f_t'(a(t))=0$ for every $t\in\D^*$, and the function $g_{f,a}$ tends to $-\infty$ as $t\to 0$.  Taking an at most finitely ramified holomorphic covering $\pi:\D^*\to\D^*$ if necessary, all the critical points of $f_{\pi(s)}$
are parameterized by holomorphic maps $c_1, \ldots, c_{2d-2}:\D\to\P^1$.  We may assume the points are labeled so that $c_1=a\circ\pi$ on $\D$.   By the formula \eqref{good g} for $g_{f,c_1}$, we have $g_{f_{\pi(\cdot)},c_1}(s)=g_{f,a}(\pi(s))$ on $\D^*$.  On the other hand, for every $j\in\{2,\ldots,2d-2\}$, 
the function $g_{f_{\pi(\cdot)}, c_j}$ is bounded from above on $\{0<|s|\le r\}$
for every $r\in(0,1)$, by Lemma \ref{subharmonic}.  Hence the sum $\sum_j g_{f_{\pi(\cdot)},c_j}(s)$ tends to $-\infty$ as $s\to 0$.  Therefore, the bifurcation current associated to the family $f_t$ fails to have a potential bounded around $t=0$.
\end{proof}

%%%%%%
%%%%%%
\bigskip
\section{Limitations of the construction}
\label{limitations}

To find the examples of Section \ref{recipe}, 
we used a rational map $\phi\in \C(z)$ of degree $\ge 1$
and points $a_0, h\in \P^1(\C)$ such that
	$$0 < [\phi^{n_j}(a_0), h] < r_{n_j}$$
in the chordal metric $[\cdot,\cdot]$, along a sequence $n_j \to \infty$, 
with $(r_n)$ chosen so that 
	$$\lim_{n\to\infty}\frac{\log r_n}{d^n}=-\infty.$$
Combining \eqref{before choosing r} with \eqref{estimate near 0} guaranteed that 
$\lim_{t\to 0}g_{f,a}(t)=-\infty$.  Looking carefully at the estimates, we see that the orbit $\{\phi^n(a_0)\}$ needs only to satisfy a weaker divergence condition 
\begin{equation} \label{fast}
	\sum_{n=0}^\infty \frac{\log \, [\phi^n(a_0), h]}{d^n}  = -\infty,
\end{equation}
with $\phi^n(a_0) \not= h$ for all $n \in\bN\cup\{0\}$, to achieve our conclusion with this method.

As observed in the Introduction, the function $g_{f,a}$ will always extend continuously to $\D$ when $f_t$ is a family of polynomials, by \cite[Main Theorem]{Favre:Gauthier:continuity}.  Here we explain explicitly why our construction breaks down for polynomials.

\begin{prop} \label{polynomial}
The construction of Section \ref{recipe} cannot produce any pair $(f,a)$ such that for every $t\in\D^*$, $f_t$ is M\"obius conjugate to a polynomial.
\end{prop}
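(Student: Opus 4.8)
The plan is to combine the classical criterion ``a rational map $f$ of degree $d\ge 2$ is M\"obius conjugate to a polynomial if and only if it has a totally invariant point $p$, i.e.\ a point with $f^{-1}(p)=\{p\}$'' with the degeneration data of the family built in Section~\ref{recipe}. Such a $p$ is automatically a fixed point at which $f$ has local degree $d$, and conjugating $p$ to $\infty$ removes all finite poles, which is the content of the criterion. So suppose for contradiction that the construction produces a family $f_t$ with every $f_t$, $t\in\D^*$, conjugate to a polynomial; for each $t$ choose a totally invariant point $p(t)\in\P^1$ of $f_t$, fix a sequence $t_n\to 0$ in $\D^*$, and by compactness of $\P^1$ pass to a subsequence so that $p(t_n)\to p_0\in\P^1$. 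Recall from the construction that $f_t\to\phi$ uniformly on compact subsets of $\P^1\setminus\{h\}$, with $\deg\phi=d-1<d=\deg f_t$, that $\{\phi^n(a_0)\}$ is infinite and omits $h$, that $h\in\omega_\phi(a_0)$, and that $\omega_\phi(a_0)\cap\{\phi^n(a_0)\}\ne\emptyset$.

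The key step is a ``one escaping preimage'' lemma: for any $w_n\to w_0$ in $\P^1$, every Hausdorff-subsequential limit of the finite sets $f_{t_n}^{-1}(w_n)$ contains $\phi^{-1}(w_0)\cup\{h\}$. For the part $\phi^{-1}(w_0)\setminus\{h\}$ this is the usual Hurwitz argument on a small disc about any preimage $q\ne h$, where $f_{t_n}\to\phi$ uniformly. For the point $h$, fix a small $\rho>0$ with $\partial\{[z,h]<\rho\}$ disjoint from $\phi^{-1}(w_0)$ and set $V_\rho=\{z:[z,h]\ge\rho\}$; then $f_{t_n}\to\phi$ uniformly on $V_\rho$, and for $n$ large the extra pole and zero of $f_{t_n}$ at $h-\eps t_n$ and $h+\eps t_n$ have left $V_\rho$, so the poles of $f_{t_n}$ in $V_\rho$ coincide with those of $\phi$. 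An argument-principle (Rouch\'e) count on $\partial V_\rho$ then shows that for $n$ large the equation $f_{t_n}(z)=w_n$ has at most $\deg\phi=d-1$ solutions in $V_\rho$ counted with multiplicity; since $f_{t_n}$ has degree $d$, at least one solution lies in $\{[z,h]<\rho\}$, and letting $\rho\to 0$ along a suitable sequence produces preimages converging to $h$. The case $w_0=\infty$ is the same after replacing $f_{t_n}$ by $1/f_{t_n}$.

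Next I would observe that $h$ is \emph{not} a totally invariant point of $\phi$, i.e.\ $\phi^{-1}(h)\ne\{h\}$. Were it so, $h$ would be a fixed point of $\phi$ of local degree $\deg\phi=d-1$: for $d\ge 3$ this makes $h$ superattracting, so $h\in\omega_\phi(a_0)$ would push the tail of the orbit into the basin of $h$ and force $\omega_\phi(a_0)=\{h\}$, which is disjoint from the infinite orbit since $h\notin\{\phi^n(a_0)\}$ --- contradicting the construction; for $d=2$ the map $\phi$ is a M\"obius map fixing $h$, and a routine check through the M\"obius trichotomy (parabolic or loxodromic: $\omega_\phi(a_0)$ is a single point, disjoint from the infinite orbit; elliptic: either the orbit is finite or $\omega_\phi(a_0)$ is a round circle missing the fixed points of $\phi$, so $h\notin\omega_\phi(a_0)$) again contradicts the construction's hypotheses. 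To conclude: $f_{t_n}^{-1}(p(t_n))=\{p(t_n)\}$, so the Hausdorff limit of these singletons is $\{p_0\}$; by the lemma it contains $\phi^{-1}(p_0)\cup\{h\}$, which forces $p_0=h$ and then $\phi^{-1}(h)\subseteq\{h\}$, hence $\phi^{-1}(h)=\{h\}$ --- contradicting the previous paragraph. This proves the Proposition.

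I expect the escaping-preimage lemma, and in particular the pole bookkeeping in the Rouch\'e count on $\partial V_\rho$, to be the main obstacle: one must verify that on $V_\rho$ the only difference between $f_{t_n}$ and $\phi$ is the pole at $h-\eps t_n$ and the zero at $h+\eps t_n$ (both of which have escaped $V_\rho$ for large $n$), choose the exhausting radii $\rho$ to avoid the finite set $\phi^{-1}(w_0)$, and treat the $w_0=\infty$ case via reciprocals. The dichotomy for M\"obius $\phi$ is elementary but needs to be written out. It is worth noting that no regularity of $t\mapsto p(t)$ is required --- only a subsequential limit of $p(t_n)$ along some sequence $t_n\to 0$.
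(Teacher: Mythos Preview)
Your argument is correct and reaches the same intermediate conclusion as the paper --- namely that the polynomial hypothesis forces $p_t\to h$ and $\phi^{-1}(h)=\{h\}$ --- but you package this differently and finish with a different contradiction. The paper proves the two claims separately by a direct Argument-Principle/Hurwitz computation (if $p_t\not\to h$ then a subsequential limit would be a fixed point of $\phi$ of local degree $d$, impossible; if some $q\neq h$ has $\phi(q)=h$ then a Hurwitz-produced preimage $q_t\neq p_t$ of $p_t$ violates total invariance). Your ``escaping preimage'' lemma is a clean unification of these, and your Rouch\'e count on $V_\rho$ is exactly the degree bookkeeping the paper is doing implicitly.

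The real divergence is in the endgame. Having established $\phi^{-1}(h)=\{h\}$, the paper derives the quantitative bound $\log[\phi^n(a_0),h]\ge C(d-1)^n$ when $d>2$ (from total invariance of $h$), and $\log[\phi^n(a_0),h]\ge Cn$ when $d=2$ (the only way a M\"obius orbit can accumulate on a fixed point without landing on it is attracting/parabolic), and observes that either bound makes the series \eqref{fast} converge. You instead argue qualitatively: a totally invariant $h$ is superattracting for $d\ge 3$, and $h\in\omega_\phi(a_0)$ then forces $\omega_\phi(a_0)=\{h\}$, contradicting $\omega_\phi(a_0)\cap\{\phi^n(a_0)\}\neq\emptyset$; for $d=2$ you run the M\"obius trichotomy against the same recurrence hypotheses. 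Your route is a bit slicker in that it uses only the bare recurrence setup of \S\ref{ingredients} and never invokes the specific decay $r_n=\exp(-nd^{n+1})$. The paper's route, on the other hand, yields the sharper statement that even the weaker divergence condition \eqref{fast} --- which is all the method actually needs --- is already impossible, and this extra precision is what motivates the parallel Proposition~\ref{Qbar}.
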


\begin{proof}
Suppose that $f_t$ is a holomorphic family of rational maps of degree $d>1$ 
parameterized by $\D^*$, that for every $t\in\D^*$, 
there exists $A_t\in\mathrm{PSL}(2,\bC)$ such that 
$A_t\circ f_t\circ A_t^{-1}$ is a polynomial, and that
$\lim_{t\to 0}f_t=\phi$ locally uniformly on $\P^1\setminus \{h\}$ 
for some $h\in \P^1$ and some $\phi\in\bC(z)$ of degree $d-1$ $(>0)$.
For every $t\in\D^*$, the point $p_t:=A_t^{-1}(\infty)$ is
a superattracting fixed point of $f_t$ for which $\deg_{p_t}f_t= d$.
We first claim that $\lim_{t\to 0} p_t=h$;
otherwise, there is a sequence $(t_j)$ in $\D^*$ tending to $0$ as $j\to\infty$
such that there is the limit $p:=\lim_{j\to\infty}p_{t_j}\in\P^1\setminus\{h\}$. 
By the locally uniform convergence $\lim_{t\to 0}f_t=\phi$ 
on $\P^1\setminus\{h\}$, $\deg\phi>0$, and the Argument Principle, this $p$ must be a superattracting fixed point of $\phi$ 
for which $\deg_p\phi=d$, contradicting $\deg\phi=d-1$.  We next claim that $\phi^{-1}(h)=\{h\}$; for,
if there is a point $q \in \P^1\setminus\{h\}$ for which $\phi(q) = h$, then by 
the first claim, the locally uniform convergence $\lim_{t\to 0}f_t=\phi$ 
on $\P^1\setminus\{h\}$, $\deg\phi>0$, and the Argument Principle, 
for any $t\in\D^*$ close enough to $0$, there must exist a point 
$q_t\in\P^1\setminus\{p_t\}$ (near $q$) for which 
$f_t(q_t)=p_t$, 
contradicting $\deg f_t=d$.

Suppose $a: \D\to \P^1$ is any holomorphic map with $a(0)=:a_0\neq h$.
If $d>2$ so that $\deg \phi = d-1>1$, then by the second claim, we have
a constant $C < 0$ so that
$$\log \, [\phi^n(a_0), h] \geq C\cdot (d-1)^n$$
for all $n\in\bN$. 
If $d=2$ so that $\deg\phi= d-1=1$, then by the second claim above, 
the orbit $\{\phi^n(a_0)\}$ can accumulate to $h$ and 
satisfy $\phi^n(a_0)\neq h$ for any $n\ge 0$
only if $h$ is an attracting or parabolic fixed point of $\phi$.
Hence we still have a constant $C < 0$ so that
	$$\log \, [\phi^n(a_0), h] \geq C\cdot n$$
for all $n\in\bN$. Therefore in both cases,
the points $a_0, h\in\P^1$ cannot satisfy \eqref{fast}.
\end{proof}

Working over the field $\C$ of complex numbers 
allowed us to exploit the Baire Category Theorem in our construction.  In fact, the construction is impossible over a field such as $\Qbar$.

\begin{prop} \label{Qbar}
The construction of Section \ref{recipe} cannot produce any pair $(f,a)$ 
such that
the map $\phi$ and points $a_0$ and $h$ are simultaneously defined over $\Qbar$.
\end{prop}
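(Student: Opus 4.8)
The plan is to show that as soon as $\phi$, $a_0$, and $h$ are all defined over $\Qbar$, the divergence condition \eqref{fast} — which, as noted above, is all that the construction of Section \ref{recipe} actually requires — must fail, so that construction is not available over $\Qbar$. First I would fix a number field $K$ containing $a_0$, $h$, and the coefficients of one fixed representative of $\phi$, and set $D := [K:\Q]$. The structural point is that $\phi^n(a_0) \in \P^1(K)$ for every $n$, so all of the orbit points $\phi^n(a_0)$ have degree at most $D$ over $\Q$. This uniform bound on the degree of the orbit points is exactly what is unavailable over $\C$, and it is what makes the rest routine.

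Next I would invoke two standard facts. The first is a Liouville-type lower bound for the chordal metric, a consequence of the product formula: there is a constant $C_0 = C_0(K)$ such that for all \emph{distinct} $x, y \in \P^1(K)$,
\[
 \log [x,y] \ \ge\ -D\bigl(\mathbf h(x) + \mathbf h(y)\bigr) - C_0,
\]
where $\mathbf h$ denotes the standard logarithmic Weil height on $\P^1(\Qbar)$ (written $\mathbf h$ to avoid a clash with the point $h$). The only subtlety is the normalization of the chordal metric at the archimedean place of $K$ underlying the given embedding $\Qbar \hookrightarrow \C$; apart from that one merely uses that the product of the local chordal distances over all places of $K$ is, up to bounded factors, $e^{-D(\mathbf h(x)+\mathbf h(y))}$, together with the fact that each local factor is at most $1$ (up to a harmless archimedean constant). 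The second fact is functoriality of heights: there is a constant $C_\phi$ with $\mathbf h(\phi(P)) \le (\deg\phi)\,\mathbf h(P) + C_\phi$ for all $P \in \P^1(\Qbar)$. Iterating this and writing $e := \deg\phi = d-1 \ge 1$, I would obtain constants $C_1, C_2, C_3$, depending only on $\phi$ and $a_0$, with $\mathbf h(\phi^n(a_0)) \le C_1 e^n + C_2 n + C_3$ for all $n$. Here the linear term $C_2 n$ is needed (and suffices) only when $e = 1$, where $\phi^n$ is represented by the $n$-th power of a $2\times 2$ matrix over $K$ and the height of the iterate grows at most linearly; for $e \ge 2$ the term $C_1 e^n$ dominates.

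Combining the two facts, with $\mathbf h(h)$ a fixed constant, yields a constant $C_4$ with
\[
 \log [\phi^n(a_0), h] \ \ge\ -C_4\bigl((d-1)^n + n + 1\bigr) \qquad (n \ge 0).
\]
Dividing by $d^n$ and summing, and using that $d \ge 2$ forces $(d-1)/d < 1$, the series $\sum_n d^{-n} \log [\phi^n(a_0),h]$ is bounded below by $-C_4 \sum_n \bigl((d-1)^n + n + 1\bigr)/d^n > -\infty$. Thus \eqref{fast} cannot hold. (If $\{\phi^n(a_0)\}$ is in fact finite, then \eqref{fast} fails even more trivially, its summands taking only finitely many finite values.) Since the construction of Section \ref{recipe} produces $\limsup_{t\to 0} g_{f,a}(t) = -\infty$ only through \eqref{fast}, it cannot be carried out with $\phi$, $a_0$, and $h$ simultaneously defined over $\Qbar$. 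I expect the only real friction to be the constant-tracking in the Liouville step and the separate (but elementary) treatment of the case $\deg\phi = 1$; conceptually the argument is short, the essential input being the uniform bound $[\Q(\phi^n(a_0)):\Q] \le D$, which is immediate once a common field of definition is chosen.
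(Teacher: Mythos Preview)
Your argument is correct and is actually more streamlined than the paper's own proof. Both proofs aim to show that \eqref{fast} fails whenever $\phi$, $a_0$, $h$ lie in $\Qbar$, but they reach this by different routes. The paper splits into cases: for $d>2$ it invokes Silverman's Theorem~E (hence Roth's theorem) to obtain the sharp estimate $\log[\phi^n(a_0),h]=o((d-1)^n)$, with a separate sub-case when $\phi$ or $\phi^2$ is polynomial-like relative to $h$; for $d=2$ it classifies the possible M\"obius maps $\phi$ whose orbit can accumulate on $h$, and in the irrational-rotation case carries out a direct product-formula computation to get a linear lower bound. Your proof instead handles all degrees at once with two elementary inputs---the Liouville inequality $\log[x,y]\ge -D(\mathbf h(x)+\mathbf h(y))-C_0$ coming straight from the product formula, and the standard functoriality bound $\mathbf h(\phi^n(a_0))\le C_1(d-1)^n+C_2 n+C_3$---to obtain $\log[\phi^n(a_0),h]\ge -C_4((d-1)^n+n+1)$. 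This is weaker than the paper's $o((d-1)^n)$ when $d>2$, but since $(d-1)/d<1$ it is already enough to make $\sum_n d^{-n}\log[\phi^n(a_0),h]$ converge, which is all that is needed. The upshot: your approach avoids the appeal to Roth/Silverman and the case-by-case M\"obius analysis at the cost of a cruder (but sufficient) lower bound, while the paper's finer estimates would matter only if one wanted quantitative information beyond the failure of \eqref{fast}.
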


\begin{proof}
Suppose $f_t$ is any holomorphic family of rational maps of degree $d>1$ parameterized by $\D^*$ such that $\lim_{t\to 0}f_t=\phi$ locally uniformly on $\P^1\setminus\{h\}$, for some $\phi \in \Qbar(z)$ of degree $d-1$ and some $h\in\P^1(\Qbar)$.  Fix any point $a_0 \in \P^1(\Qbar)$ such that $\phi^n(a_0) \not= h$ for all $n\geq 0$.  

Suppose first that $d>2$, so that $\deg \phi=d-1>1$. 
If 
there is $A\in\mathrm{PSL}(2,\overline{\Q})$ such that
either $A\circ\phi\circ A^{-1}$ or $A\circ\phi^2\circ A^{-1}$ 
is a polynomial and that $A(h)=\infty$, then
we have a constant $C< 0$ so that 
	$$\log \, [\phi^n(a_0), h] \geq C (d-1)^n$$
for all $n\in\bN$.
Otherwise, by \cite[Theorem E]{Silverman:integer}, 
which uses the Roth theorem, we have the stronger result that 
	$$\log \; [\phi^n(a_0), h] = o((d-1)^n)$$
as $n\to \infty$. 
Therefore, in both cases, $\phi$, $a_0$, and $h$ cannot satisfy \eqref{fast}.  

Now suppose that $d = 2$ so that $\deg \phi=d-1 = 1$. 
Note that the orbit $\{\phi^n(a_0)\}$ can accumulate to $h$ and 
satisfy $\phi^n(a_0)\neq h$ for any $n\ge 0$
only if either $h$ is an attracting or parabolic fixed point of $\phi$ (in $\P^1(\overline{\Q})$) or there exists $A\in\mathrm{PSL}(2,\overline{\Q})$
such that $A\circ\phi\circ A^{-1}$ is an irrational rotation $z\mapsto\lambda z$, 
where $\lambda$ is not a root of unity, $\lambda\in\overline{\Q}$, 
and $|\lambda|=1$, with $|A(h)|=|A(a_0)|=1$.  
In the former case, we have a constant $C < 0$ so that
$\log \, [\phi^n(a_0), h] \geq C\cdot n$
for all $n\in\N$. So $\phi$, $a_0$, and $h$ cannot satisfy \eqref{fast}. 

In the latter case, we claim that we still have a constant $C<0$ such that
$$
\log[\phi^n(a_0),h]\ge C\cdot n
$$
for all $n\in\N$; since $A\in\mathrm{PSL}(2,\overline{\Q})$ is biLipschitz
with respect to $[\cdot,\cdot]$, we can assume that
$\phi$ is an irrational rotation 
$z\mapsto\lambda z$, where $\lambda$ is not a root of unity, 
$\lambda\in\overline{\Q}$, 
and $|\lambda|=1$, with $h,a_0\in\overline{\Q}$ and $|h|=|a_0|=1$. 
Fix a number field $K$ so that $\lambda,a_0,h\in K$, 
and denote by $M_K$ the set of all places (i.e., equivalence classes of
non-trivial either archimedean or non-archimedean absolute values) of $K$. 
Recall that there are a family
$(N_v)_{v\in M_K}$ in $\bN$ and a family $(|\cdot|_v)_{v\in M_K}$ of representatives
$|\cdot|_v$ of places $v$ such that
for every $x\in K^*$, $|x|_v=1$ for all but finitely many $v\in M_K$
and $\prod_{v\in M_K}|x|_v^{N_v}=1$.
Then by the (strong) triangle inequality, we can choose
a family of real numbers $C_v\ge 1$, $v\in M_K$, such that 
$|\lambda^n - h/a_0|_v\leq C_v^n$ for any $v\in M_K$ and any $n\in\N$
and that $C_v = 1$ for all but finitely many $v\in M_K$.
We also note that $\lambda^n a_0=\phi^n(a_0)\neq h$
for all $n\in\N$. Hence for every $v_0\in M_K$ and every $n\in\N$,
we have $|\lambda^n - h/a_0|_{v_0}\ge(\prod_{v\in M_K} C_v^{-N_v})^n$.
In particular, recalling that $[z,w]=|z-w|[z,\infty][w,\infty]$
on $\bC\times\bC$, there is a constant $C<0$ such that
$$
\log[\phi^n(a_0),h]=\log|\lambda^n - h/a_0|-\log 2
\ge C\cdot n
$$
for all $n\in\N$. So the claim holds, and $\phi$, $a_0$, and $h$ cannot satisfy \eqref{fast}. 
\end{proof}

\bigskip \bigskip
%\bibliographystyle{../tex/bib/math}
%\bibliography{../tex/bib/math}

\def\cprime{$'$}

\end{document}